\newcommand{\R}{{\mathbb R}}
\newcommand{\N}{{\mathbb N}}
\newcommand{\Z}{{\mathbb Z}}
\def\E{{\cal E}}
\newtheorem{defn}{Definition}[section]
\newtheorem{theo}[defn]{Theorem}
\newtheorem{rem}[defn]{Remark}
\newtheorem{exam}[defn]{Example}
\newenvironment{proof}{{\bf Proof }}{{\vskip 0.1cm \hfill$\Box$}}
\begin{document}

\noindent
{\Large\bf Explicit recurrence criteria for symmetric gradient type Dirichlet forms satisfying a Hamza type condition}

\bigskip
\noindent
{\bf by Minjung Gim} \footnote{The research of Minjung Gim was supported by NRF (National Research Foundation of Korea) Grant funded by the Korean Government(NRF-2012-Global Ph.D. Fellowship Program).} 
{\bf and Gerald Trutnau} \footnote{The research of Gerald Trutnau was supported by Basic Science Research Program through the National Research Foundation of Korea(NRF) funded by the Ministry of Education, Science and Technology(MEST)(2012006987) 
and Seoul National University Research Grant 0450-20110022.}

\bigskip
\noindent
{\it Department of Mathematical Sciences and Research Institute of Mathematics of Seoul National University,
599 Gwanak-Ro, Gwanak-Gu, Seoul 151-747, South Korea}

\bigskip
\noindent
{E-mails: ellipse@snu.ac.kr, trutnau@snu.ac.kr}

\bigskip
\noindent
{\small{\bf Abstract.}  In this note, we present explicit conditions for symmetric gradient type Dirichlet forms to be recurrent. 
This type of Dirichlet form is typically strongly local and hence associated to a diffusion. We consider the one dimensional case 
and the multidimensional case, as well as the case with reflecting boundary conditions. Our main achievement is that the explicit results are obtained under quite weak assumptions on the closability, hence regularity of the underlying coefficients.  Especially in dimension one, where a Hamza type condition is assumed, the construction of the sequence of functions $(u_n)_{n\in \N}$ in the Dirichlet space that determine 
recurrence works for quite general Dirichlet forms but is still explicit.}

\bigskip
\noindent
{\bf Mathematics Subject Classification (2000)}: 31C25, 60J60, 60G17.

\bigskip\noindent 
{\bf Key words:} Symmetric Dirichlet forms, diffusion processes, recurrence criteria.

\section{Introduction}\label{1}
For all notions and results that may not be defined or cited in this introduction we refer to \cite{FOT}. \\
Let $E$ be a locally compact separable metric space and $\mu$ be a $\sigma$-finite measure on the Borel $\sigma$-algebra of $E$. 
Consider a regular symmetric Dirichlet form $(\E,D(\E))$ on $L^2(E,\mu)$ with associated resolvent $(G_\alpha)_{\alpha > 0}$. 
Then it is well-known that there exists a Hunt process 
$\mathbb{M}=(\Omega,({\cal F}_t)_{t\ge 0},(X_t)_{t\ge 0}, (P_x)_{x\in E_\Delta})$ with lifetime $\zeta$ such that the resolvent 
$R_\alpha f(x)=E_{x}[\int_0^{\infty}e^{-\alpha t}f(X_t)dt]$ of $\mathbb{M}$ is a q.c. $\mu$-version of $G_\alpha f$ for all $\alpha  >0$, $f\in L^2(E,\mu)$, $f$ bounded (see\cite[Theorems 4.2.3 and 7.2.1]{FOT}). Here $E_x$ denotes the expectation w.r.t. $P_x$. Then $(\E,D(\E))$ is (called) {\it recurrent}, iff for 
any $f\in L^1(E,\mu)$ which is strictly positive, we have
\begin{eqnarray}\label{recurrent}
E_x[\int_0^{\infty}f(X_t)dt]=\infty \mbox{ for } \mu\mbox{-a.e. } x\in E.
\end{eqnarray}
The last is equivalent to the existence of a sequence $(u_n)_{n\in \N}\subset D(\E)$ with $0\leq u_n \leq 1$, $n\in \N$, $u_n \nearrow 1 $ as $n \rightarrow \infty$ $\mu$-a.e. and
$\lim_{n\rightarrow \infty}\E(u_n , u_n)=0$ (cf. \cite[Lemma 1.6.4 (ii) and Theorem 1.6.3]{FOT} and \cite[Corollary 2.4 (iii)]{o92r}). 
In particular, recurrence implies conservativeness, which means that $P_x(\zeta=\infty)=1$ for quasi every $x\in E$ (see \cite[Corollary 2.4]{o92r}). If the Dirichlet form $(\E,D(\E))$ is additionally {\it irreducible}, then some more refined recurrence statements than (\ref{recurrent}) can be made 
(cf. \cite[Chapter 4.7]{FOT2} and \cite{ge80}), and if the Dirichlet form $(\E,D(\E))$ satisfies the absolute continuity condition (cf. \cite[(4.2.9)]{FOT}), then the statements hold pointwise and not only for quasi every $x\in E$ (see \cite[Problem 4.6.3]{FOT}).\\
A sufficient condition for recurrence determined by the $\mu$-volume growth of balls is derived in \cite[Theorem 3]{Stu1} for energy forms, i.e. Dirichlet forms that are given by a carr\'e du champs. The condition is very general, but in some cases not explicit, i.e. difficult to verify (cf. e.g. \cite[Proposition 3.3. and Theorem 3.11]{oukrutr} where the determination of the $\mu$-volume of balls is tedious). Moreover, in \cite{Stu1} it is throughout assumed that the underlying Dirichlet form is irreducible, which we do not assume. In \cite{o92r} and \cite[Theorem 1.6.7]{FOT} the recurrence determining sequence $(u_n)_{n\in \N}\subset D(\E)$ is explicitly constructed, but the underlying Dirichlet form fulfills stronger (explicit) assumptions than in \cite{Stu1}, and moreover \cite{o92r} and \cite[Theorem 1.6.7]{FOT} concern only variants of the $(a_{ij})$-case. \\
This note concerns an intermediate approach, which resembles the construction in \cite[Theorem 1.6.7]{FOT}, but under quite more general assumptions. 
In particular, we consider reference measures that are different to the Lebesgue measure.
The conditions for the existence of a recurrence determining sequence $(u_n)_{n\in \N}$ given here are (as in \cite{FOT} and \cite{Stu1}) sufficient. 
However, in dimension one they turn out to be equivalent, whenever there exists a natural scale (see e.g. \cite[Lemma 3.1]{o92r} and \cite[Theorem 3.11]{oukrutr}).
\section{The one dimensional non-reflected case}\label{2}
Let $\varphi \in L^1_{loc}(\R,dx)$ with $\varphi>0$ $dx$-a.e. and let $\mu$ be the $\sigma$-finite measure defined by $d\mu=\varphi dx$. In particular, it then follows that $\mu$ is $\sigma$-finite and $\mu $ has full support, i.e.
$$\int_V \varphi(x) dx>0\qquad for\; all\; V \subset \R,\; V\;non-empty\;and\;open.$$
Let $\sigma$ be a (measurable) function such that $\sigma>0$ $dx$-a.e. and such that $\sigma \varphi \in L^1_{loc}(\R, dx)$. Consider the symmetric bilinear form
$$\E(f,g):= \frac{1}{2} \int_\R \sigma(x) f'(x)g'(x) \mu(dx), \qquad f,g\in C^\infty_0(\R)$$
on $L^2 (\R,\mu)$. Here $f'$ denotes the derivative of a function $f$ and $C_0^\infty (\R)$ denotes the set of all infinitely differentiable function on $\R$ with compact support. Let $U$ be the largest open set in $\R$ such that
$$
\frac{1}{\sigma\varphi}\in L^1_{loc}(U,dx)
$$
and assume
\begin{equation}\label{Hamza}
dx(\R \setminus U)=0.
\end{equation}
\centerline{}
Furthermore we suppose $(\E,C_0^\infty (\R ))$ is closable on $L^2(\R,d\mu)$. (By the results of (\cite[II, 2 a)]{mr}), if there is some open set $\tilde{U}\subset \R$ such that $1/\varphi \in L^1_{loc}(\tilde{U},dx)$ and $dx(\R \setminus(U\cap \tilde{U}))=0$, then $(\E,C_0^\infty(\R))$ is closable on $L^2(\R,\mu)$.)  Denote the closure by $(\E,D(\E))$. We want to present sufficient conditions for the recurrence of the symmetric Dirichlet form $(\E,D(\E))$. Here we recall $(\E,D(\E))$ is called recurrent (in the sense of \cite{FOT}) if there exists a sequence $(u_n)_{n\in \N}\subset D(\E)$ with $0\leq u_n \leq 1$, $n\in \N$, $u_n \nearrow 1 $ as $n \rightarrow \infty $ $dx$-a.e. and
$$ \lim_{n\rightarrow \infty}\E(u_n , u_n)=0.$$

\centerline{}

\begin{rem} 
Under quite weak regularity assumptions on $\sigma$ and $\varphi$ one can show using integration by parts that the generator $L$ corresponding to $(\E,D(\E))$ (for the definition of generator see \cite{FOT} or \cite{mr}) is given by
$$
Lf= \frac{\sigma}{2}f'' + \frac{1}{2}\Big ( \sigma ' + \sigma \frac{\varphi '}{\varphi} \Big) f',
$$
i.e. for $f\in D(L)\subset D(\E)$ we have
$$-\int_\R Lf \cdot g d\mu =\E(f,g).$$
In particular (assuming again that everything is sufficiently regular) choosing
$$\varphi(x)=\frac{1}{\sigma(x)}e^{\int_0^x  \frac{2b}{\sigma}(s)ds}$$
for some (freely chosen) function $b$, we get
$$Lf= \frac{\sigma}{2}f''+bf'.$$
Therefore our framework is suitable for the description of nearly any diffusion type operator in dimension one with concrete coefficients.
\end{rem}
\centerline{}
Since $U\subset \R$ is open, $U$ is the disjoint union (we use the symbol $\cupdot$ to denote this) of countably (finite or infinite) many open intervals.\\
There are five possible cases that we summarize in the following theorem.

\begin{theo}\label{nonreflected}
$(\E,D(\E))$ is recurrent if one of the following conditions holds:\\
(i) $U=(-\infty, \infty)$ and
$$\int_{-\infty}^0 \frac{1}{\sigma \varphi}(s) ds = \int_0^\infty \frac{1}{\sigma \varphi}(s) ds =\infty.$$
(ii) $U=(-\infty,a) \cupdot V \cupdot (b,\infty)$ where $V$ is some open set (so either $V$ is empty if $a=b$, or $V$ is non-empty if $a<b$) and
$$\int ^{a-1}_{-\infty} \frac{1}{\sigma \varphi}(s) ds = \int^\infty_{b+1}\frac{1}{\sigma \varphi}(s) ds =  \infty.$$
(iii) $U= \bigcup_{n\in \N} I_{-n} \cupdot V \cupdot \bigcup_{n\in \N}I_n$, $I_n=(x_n, x_{n+1})$, $x_n < x_{n+1}$, $I_{-n}=(x_{-n}, x_{-n+1})$, $x_{-n} < x_{-n+1}$, $n\in \N$, $V$ some open set, and
\begin{align*}
a_n&:= \int_{c_n}^{d_n} \frac{1}{\sigma \varphi}(s) ds \; \longrightarrow \infty \\
a_{-n}=b_n&:=\int^{c_{-n}}_{d_{-n}} \frac{1}{\sigma \varphi}(s) ds \; \longrightarrow \infty
\end{align*}
as $n \rightarrow \infty$, where
$$c_n= \frac{x_{n+1}+x_n}{2}, \quad n\in \Z \setminus \{0\},$$
and for $n\in \N$
$$
d_n:= \begin{cases}\;x_{n+1}-\frac{1}{n}\qquad \;\qquad &\;\;x_{n+1}-c_n>1,\\
\; x_{n+1}-\frac{x_{n+1}-c_n}{n} &\;\;x_{n+1}-c_n\leq 1,
\end{cases}
$$
$$
d_{-n}:= \begin{cases}\;x_{-n}+\frac{1}{n}\qquad \;\qquad &\;\;c_{-n}-x_{-n}>1,\\
\; x_{-n}+\frac{c_{-n}-x_{-n}}{n} &\;\;c_{-n}-x_{-n}\leq 1.
\end{cases}
$$
(iv) $U=\bigcup_{n\in \N}I_{-n}\cupdot V \cupdot (b,\infty),$ $I_{-n}=(x_{-n}, x_{-n+1}),$ $x_{-n}<x_{-n+1}$, $n\in \N,$ $V$ some open set,
$$ 
\int^\infty_{b+1}\frac{1}{\sigma \varphi}(s) ds =  \infty,
$$
and 
$$
a_{-n}=b_n:=\int^{c_{-n}}_{d_{-n}} \frac{1}{\sigma \varphi}(s) ds \; \longrightarrow \infty
$$
as $n \rightarrow \infty$ where for $n\in \N$
$$
c_{-n}= \frac{x_{-n}+x_{-n+1}}{2}
$$
and
$$
d_{-n}(x):= \begin{cases}\;x_{-n}+\frac{1}{n}\qquad \;\qquad &\;\;c_{-n}-x_{-n}>1,\\
\; x_{-n}+\frac{c_{-n}-x_{-n}}{n} &\;\;c_{-n}-x_{-n}\leq 1.
\end{cases}
$$
(v) $U=(-\infty,a)\cupdot V \cupdot \bigcup_{n\in \N} I_n$, $I_{n}=(x_{n}, x_{n+1}),$ $x_{n}<x_{n+1}$, $n\in \N,$ $V$ some open set,
$$
\int ^{a-1}_{-\infty} \frac{1}{\sigma \varphi}(s) ds=\infty,
$$
and 
$$ 
a_n:= \int_{c_n}^{d_n} \frac{1}{\sigma \varphi}(s) ds \; \longrightarrow \infty
$$
as $n\rightarrow \infty$ where for $n\in \N$
$$
c_n= \frac{x_{n+1}+x_n}{2}
$$
and 
$$
d_n:= \begin{cases}\;x_{n+1}-\frac{1}{n}\qquad \;\qquad &\;\;x_{n+1}-c_n>1,\\
\; x_{n+1}-\frac{x_{n+1}-c_n}{n} &\;\;x_{n+1}-c_n\leq 1.
\end{cases}
$$
\end{theo}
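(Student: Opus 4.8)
The plan is to produce, in each of the five cases, an explicit \emph{monotone} sequence $(u_n)_{n\in\N}$ satisfying the recurrence criterion recalled in Section \ref{2}: $u_n\in D(\E)$, $0\le u_n\le 1$, $u_n\nearrow 1$ $dx$-a.e., and $\E(u_n,u_n)\to 0$. The single computation driving everything is the energy of a \emph{scale interpolation}. If $J=[p,q]$ is a compact subinterval of one component of $U$ and
$$u(x)=\Big(\int_p^x \tfrac{1}{\sigma\varphi}(s)\,ds\Big)\Big/\Big(\int_p^q \tfrac{1}{\sigma\varphi}(s)\,ds\Big),\qquad x\in J,$$
then $u'=\tfrac{1}{\sigma\varphi}\big/\!\int_J\tfrac{1}{\sigma\varphi}$ on $J$, so that the energy spent on $J$ is
$$\frac12\int_J (u')^2\,\sigma\varphi\,dx=\frac{1}{2\int_J \tfrac{1}{\sigma\varphi}(s)\,ds}.$$
Hence a full transition of $u$ between the values $0$ and $1$ across $J$ costs energy $\big(2\int_J\tfrac1{\sigma\varphi}\big)^{-1}$, which is small exactly when $\int_J\tfrac1{\sigma\varphi}$ is large. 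This is the analytic content of all five divergence hypotheses.

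I would carry out case (iii) as the representative case; cases (i), (ii), (iv), (v) are obtained by replacing one or both ``interval-sequence'' ends by a semi-infinite end, transitioning instead on $(-\infty,a-1]$ resp. $[b+1,\infty)$ and letting the outer cut-off point tend to $\mp\infty$, which is why there the relevant integral reaches down to $a-1$ resp. up to $b+1$ (staying a fixed distance away from the singular endpoints $a,b$). In case (iii) I first observe that, since $U=\bigcup_n I_{-n}\cup V\cup\bigcup_n I_n$ exhausts $U$ and $dx(\R\setminus U)=0$, necessarily $x_n\to+\infty$ and $x_{-n}\to-\infty$ (otherwise a half-line would be disjoint from $U$, contradicting (\ref{Hamza})). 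I then define $u_n$ to equal $1$ on the whole central part of $U$ (on $V$, on $I_k$ and $I_{-k}$ for $1\le k\le n-1$, on the bad points between them, and on $(x_n,c_n]$ resp. $[c_{-n},x_{-n+1})$), to perform the scale interpolation from $1$ at $c_n$ down to $0$ at $d_n$ on $[c_n,d_n]$ (and symmetrically from $0$ at $d_{-n}$ up to $1$ at $c_{-n}$ on $[d_{-n},c_{-n}]$), and to equal $0$ on $[d_n,x_{n+1})$, on $(x_{-n},d_{-n}]$, and on every outer interval $I_m,I_{-m}$ with $m>n$. The definition of $d_n$ guarantees $c_n<d_n<x_{n+1}$ with $d_n\uparrow x_{n+1}$, so each transition region is a compact subinterval of its component of $U$ and $a_n=\int_{c_n}^{d_n}\tfrac1{\sigma\varphi}\in(0,\infty)$, likewise $b_n$.

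With this $u_n$ the three analytic properties are immediate. One has $0\le u_n\le 1$ by construction; $u_{n+1}\ge u_n$ because the $(n{+}1)$-st transition sits in $I_{n+1}$, strictly to the right of the $n$-th, and the left transition moves outward symmetrically, so $u_n\nearrow$; and $u_n\to 1$ $dx$-a.e., since every point of $U$ lies in some fixed $I_k$, $I_{-k}$ or in $V$, on which $u_n\equiv1$ once $n$ is large, while $\R\setminus U$ is $dx$-null. Finally, by the energy identity,
$$\E(u_n,u_n)=\frac{1}{2a_n}+\frac{1}{2b_n}\ \longrightarrow\ 0$$
by the hypotheses $a_n,b_n\to\infty$.

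The one substantial point — and the step I expect to be the main obstacle — is to verify $u_n\in D(\E)$, i.e.\ that $u_n$ lies in the $\E_1$-closure of $C_0^\infty(\R)$; this is where the Hamza condition (\ref{Hamza}) is essential, because finite energy alone does not place a merely absolutely continuous function in $D(\E)$. The key structural fact is that $u_n$ is \emph{locally constant near every bad point}: around each interior $x_k$ it equals $1$, around each outer $x_m$ ($m>n$) it equals $0$, and the only non-constant pieces are the two transition regions, which are compactly contained in $U$. I would mollify $u_n$ only inside those regions: on $[c_n,d_n]$ replace $g_n:=u_n'$ by smooth $h^{(j)}$ compactly supported in $(c_n,d_n)$ with $\int_{c_n}^{d_n}h^{(j)}=\int_{c_n}^{d_n}g_n$ and $\int_{c_n}^{d_n}(h^{(j)}-g_n)^2\sigma\varphi\,dx\to0$, possible because smooth functions are dense in $L^2([c_n,d_n],\sigma\varphi\,dx)$ (a finite measure on a compact interval). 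The crucial estimate is that, by Cauchy--Schwarz,
$$\int_{c_n}^{d_n}|h^{(j)}-g_n|\,dx\le\Big(\int_{c_n}^{d_n}(h^{(j)}-g_n)^2\sigma\varphi\,dx\Big)^{1/2}\Big(\int_{c_n}^{d_n}\tfrac1{\sigma\varphi}\,dx\Big)^{1/2}\longrightarrow 0,$$
the last factor being $\sqrt{a_n}<\infty$; hence the antiderivatives converge uniformly. The resulting smooth pieces glue to the genuinely constant (hence smooth) values $1$ and $0$ across all bad points, producing $\psi^{(j)}\in C_0^\infty(\R)$ with $\psi^{(j)}\to u_n$ in $\E_1$ (the $L^2(\mu)$-part of the convergence follows from uniform convergence and $\mu(\mathrm{supp}\,u_n)<\infty$). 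Thus $u_n\in D(\E)$, and the recurrence criterion then yields recurrence. It is precisely the interplay $\tfrac1{\sigma\varphi}\in L^1_{loc}(U)$ together with $\sigma\varphi\in L^1_{loc}$, and the flatness of $u_n$ across the $dx$-null bad set, that makes this mollification converge in the energy norm.
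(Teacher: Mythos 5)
Your construction is exactly the paper's: the same scale interpolation $u_n$ (equal to $1$ on a central region, interpolating via the normalized primitive of $\frac{1}{\sigma\varphi}$ on $[c_n,d_n]$ and $[d_{-n},c_{-n}]$, resp.\ on $[b+1,b+1+n]$ and $[a-1-n,a-1]$ or $[0,\pm n]$ in the semi-infinite cases, and $0$ outside), and the same energy identity $\E(u_n,u_n)=\frac{1}{2a_n}+\frac{1}{2b_n}\rightarrow 0$; like the paper, you treat one case in full and obtain the others by combining the two types of ends. Where you genuinely diverge is the verification that $u_n\in D(\E)$, which is indeed the only delicate step. The paper convolves $u_n$ \emph{globally} with a standard mollifier $\eta_\epsilon$, shows $\eta_\epsilon\ast u_n\rightarrow u_n$ in $L^2(\mu)$, bounds the energies uniformly in $\epsilon$ via $|\eta_\epsilon\ast u_n'|\le\|u_n'\|_{L^1(\R,dx)}$ together with $\sigma\varphi\in L^1_{loc}$, and then invokes the closability lemma \cite[Lemma 2.12]{mr}, which yields membership in the closure from a uniform energy bound without strong convergence of the approximants in the form norm. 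You instead approximate \emph{directly in the $\E_1$-norm}: you approximate $u_n'$ in $L^2(\sigma\varphi\,dx)$ by smooth functions compactly supported inside the transition intervals with matched integral, and the weighted Cauchy--Schwarz estimate with $\frac{1}{\sigma\varphi}\in L^1([c_n,d_n],dx)$ (i.e.\ precisely the Hamza condition (\ref{Hamza})) gives uniform convergence of the primitives, so the glued functions lie in $C_0^\infty(\R)$ and converge to $u_n$ both in energy and in $L^2(\mu)$. Both arguments are correct; the paper's is shorter given the cited lemma and, since $u_n'\in L^1$ with compact support in all five cases, its mollification argument does carry over verbatim to (ii)--(v), while yours is self-contained, produces strong $\E_1$-convergence rather than a Banach--Saks-type conclusion, and makes explicit why the approximation is insensitive to the bad set $\R\setminus U$ (local constancy of $u_n$ near every bad point) --- exactly the point the paper glosses over when it dispatches (ii)--(v) with ``satisfies the desired properties''. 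You also fill a small gap left implicit in the paper: in case (iii) you derive $x_n\rightarrow+\infty$ and $x_{-n}\rightarrow-\infty$ from (\ref{Hamza}) and the structure of the decomposition of $U$, which is needed for $u_n\nearrow 1$ $dx$-a.e.
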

\centerline{}
\centerline{}
\begin{rem}\label{openset}
With the obvious modifications Theorem \ref{nonreflected} can be reformulated for Dirichlet forms that are given as the closure of
$$ \frac{1}{2}\int_V \sigma(x) f'(x) g'(x) \mu(dx), \qquad f,g\in C^\infty_0(V)$$
where $V$ is an arbitrary open and connected set in $\R$. We omit this here to avoid trivial complications.
\end{rem}
\centerline{}
\centerline{}
\begin{proof}
(i) By \cite[Theorem 1.6.3]{FOT}, it suffices to find a sequence $(u_n)_{n\in \N}\subset D(\E)$ with $0\leq u_n \leq 1$, $n\in \N$, $u_n \nearrow 1 $ as $n \rightarrow \infty $ and $ \lim_{n\rightarrow \infty}\E(u_n , u_n)=0.$ Define sequences $a_n$ and $b_n$ by
$$
a_n:= \int_0^n \frac{1}{\sigma \varphi}(s)ds, \qquad b_n:= \int_{-n}^0 \frac{1}{\sigma \varphi}(s) ds.
$$
Since $1/\sigma\varphi \in L^1_{loc}(\R,dx)$, these are well defined and converge to $\infty$. Let\\
$$
u_n(x):= \begin{cases}\; 1-\frac{1}{a_n}\int_0^x \frac{1}{\sigma\varphi}(t)dt\qquad \; \;&x\in[0,n],\\
\; 1-\frac{1}{b_n}\int_x^0 \frac{1}{\sigma \varphi}(t)dt&x\in[-n,0], \\
\;0&elsewhere.
\end{cases}
$$
For each $n\in \N$, $u_n$ has compact support and is bounded. Moreover, since $1/\sigma\varphi \in L^1_{loc}(\R,dx)$, $u_n(x)$ is differentiable at every Lebesgue point of $1/\sigma \varphi$, hence $dx$-a.e. Furthermore, we can easily check $u_n \nearrow 1$ $dx$-a.e. as $n \rightarrow \infty$ and $u_n'(x)$ exists $dx$-a.e. with
$$
u_n'(x)= \begin{cases}\; -\frac{1}{a_n} \frac{1}{\sigma\varphi}(x)\qquad \; \;&x\in[0,n],\\
 \;\frac{1}{b_n}\frac{1}{\sigma \varphi}(x)&x\in[-n,0], \\
\; 0&elsewhere.
\end{cases}
$$
Thus, it remains to show that $(u_n)_{n\in \N} \subset D(\E)$. Let $\eta$ be a standard mollifier on $\R$. Set $\eta_\epsilon(x)= \frac{1}{\epsilon} \eta(\frac{x}{\epsilon})$ so that $\int_\R \eta_\epsilon dx=1$ and so that the support of $\eta_\epsilon$ is in $(-\epsilon,\epsilon)$. Then $\eta_\epsilon*u_n\in C^\infty_0(\R)$ and $ (\eta_\epsilon*u_n)'=\eta_\epsilon*u_n',$ $n\in \N.$ We have
$$ (\eta_\epsilon*u_n(x)-u_n(x))= \int_\R [u_n(x-y)-u_n(x)] \eta_\epsilon(y) dy,$$
$$| \eta_\epsilon*u_n(x)-u_n(x)|^2 \leq \int_\R |u_n(x-y)-u_n(x)|^2 \eta_\epsilon(y) dy,$$
\begin{align*}
\int_\R (\eta_\epsilon*u_n (x)-u_n(x))^2 \varphi(x) dx &\leq \int_\R \int_\R |u_n(x-y)-u_n(x)|^2 \eta_\epsilon(y) dy \varphi(x)dx \\
&= \int_\R \int_\R |u_n(x-y) - u_n(x)|^2 \varphi(x)dx \eta_\epsilon(y) dy\\
&=\int_\R \int_\R |u_n(x-\epsilon y)-u_n(x)|^2 \varphi(x) dx \eta(y)dy.
\end{align*}
\centerline{}
If we let $g(y):= \int_\R |u_n(x-y)-u_n(x)|^2 \varphi(x) dx$, then $g(0)=0$ and $g$ is continuous and bounded. Thus by the Dominated Convergence Theorem $\lim_{\epsilon \rightarrow 0} \eta_\epsilon*u_n  = u_n$ in $L^2(\R,\mu)$. Furthermore for $0<\epsilon<1$
\begin{align*}
\E(\eta_\epsilon*u_n,\eta_\epsilon*u_n)&= \int_\R \Big (\eta_\epsilon*u_n'(x) \Big )^2 \sigma(x)\varphi(x) dx \\
&\leq \| u_n' \|^2_{L^1(\R,dx)} \int_{-n-1}^{n+1} \sigma(x)\varphi (x) dx
\end{align*}
Since $| \eta_\epsilon*u_n'(x)| \leq \|u_n'\|_{L^1(\R, dx)} < \infty$, we have
$$ \sup_{0<\epsilon<1} \E(\eta_\epsilon*u_n, \eta_\epsilon*u_n) < \infty.$$
Thus from \cite[Lemma 2.12]{mr}, $u_n \in D(\E)$ for all $n\in \N.$
\begin{align*}
\E(u_n,u_n) &= \frac{1}{2} \int_\R u_n'(x)  u_n'(x) \sigma(x)\varphi (x) dx\\
&=\frac{1}{2} \Big[ \frac{1}{a_n^2} \int_0^n \frac{1}{\sigma\varphi}(x) dx +\frac{1}{b_n^2}\int_{-n}^0 \frac{1}{\sigma \varphi}(x) dx \Big]\\
&= \frac{1}{2} \big[\frac{1}{a_n}+\frac{1}{b_n}\big].
\end{align*}
Therefore  $\lim_{n\rightarrow \infty}\E(u_n,u_n)= 0$. i.e. $(\E,D(\E))$ is recurrent. \\
\centerline{}
(ii) Let
$$
u_n(x):= \begin{cases}\;1 \qquad \qquad \qquad \qquad &x\in [a-1,b+1],\\
\; 1-\frac{1}{a_n}\int_{b+1}^x \frac{1}{\sigma \varphi}(t)dt&x\in[b+1,b+1+n],\\
\; 1-\frac{1}{b_n}\int_x^{a-1} \frac{1}{\sigma \varphi}(t)dt&x\in[a-1-n,a-1], \\
\;0&\quad elsewhere,
\end{cases}
$$
where $a_n=\int^{b+1+n}_{b+1} \frac{1}{\sigma\varphi}(s) ds$, $ b_n=\int^{a-1}_{a-1-n} \frac{1}{\sigma \varphi}(s) ds$. Then $(u_n)_{n\in \N}$ satisfies the desired properties and determines recurrence.\\
\centerline{}
(iii) Let
$$
u_n(x):= \begin{cases}\;1 \qquad \qquad \qquad \qquad &x\in [c_{-n},c_n],\\
\; 1-\frac{1}{a_n}\int_{c_n}^x \frac{1}{\sigma\varphi}(t) dt&x\in[c_n,d_n],\\
\; 1-\frac{1}{b_n}\int_x^{c_{-n}} \frac{1}{\sigma\varphi}(t) dt&x\in[d_{-n},c_{-n}], \\
\;0&\quad elsewhere,
\end{cases}
$$
where $a_n= \int_{c_n}^{d_n} \frac{1}{\sigma \varphi}(s) ds$, $b_n= \int_{c_{-n}}^{d_{-n}} \frac{1}{\sigma \varphi}(s) ds$. Then $(u_n)_{n\in \N}$ satisfies the desired properties and determines recurrence.\\
\centerline{}
(iv) and (v) are combinations of (ii) and (iii) and are proved by combining the proofs of (ii) and (iii).
\end{proof}
\centerline{}
\begin{rem}\label{notirreducible}
Note that we do not assume that $(\E,D(\E))$ is irreducible. As a non-trivial example consider the following:
Let $S=\{ x_i \in \R |\, i\in \Z\}$ with $x_i < x_{i+1}$ for all $i\in \Z$ and assume $S$ does not have an accumulation point in $\R$. For $\alpha \geq 1$, define a function $\varphi$ by
$$ 
\varphi(x)= |x-x_i|^\alpha,  x\in \big[ \frac{x_i+x_{i-1}}{2}, \frac{x_i+x_{i+1}}{2}   \big], i\in \Z.$$
Then $\varphi>0$ on $\R\setminus S$, hence $dx$-a.e. Assume $\sigma \equiv 1$, then since $1/\varphi \in L^1_{loc}(\R \setminus S,dx),$ (\ref{Hamza}) is also satisfied. Thus, the symmetric bilinear form $(\E,C_0^\infty(\R))$ defined by
$$
\E(f,g):=\frac{1}{2} \int_\R f'(x)g'(x) \mu(dx), \qquad f,g\in C^\infty_0(\R)
$$
is closable on $L^2(\R,d\mu)$ where $d\mu=\varphi dx$. Define the sequences $a_n$, $b_n$, $c_n$ and $d_n$ as in the Theorem \ref{nonreflected} (iii), then
$$
a_n= \int_{c_n}^{d_n} \frac{1}{\varphi}(s) ds=\int_{c_n}^{d_n} \frac{1}{(x_{n+1}-s)^\alpha} ds.
$$
(i) If $\alpha=1$, then
\begin{align*}a_n&=-\log(x_{n+1}-s)|^{d_n}_{c_n} \\
&=-\log(x_{n+1}-d_n) + \log (x_{n+1}-c_n).
\end{align*}
In this case, if $x_{n+1}-c_n>1$, then $x_{n+1}-d_n=\frac{1}{n}$ and
\begin{align*}a_n&=-\log\big(\frac{1}{n}\big) +\log(x_{n+1}-c_n) \\
&>\log n.
\end{align*}
And if $x_{n+1}-c_n\leq 1$, then $x_{n+1}-d_n=\frac{x_{n+1}-c_n}{n}$ and
\begin{align*}a_n&=-\log\big(\frac{x_{n+1}-c_n}{n}\big) +\log(x_{n+1}-c_n) \\
&=\log n.
\end{align*}
Thus $\lim_{n\rightarrow \infty}a_n=\infty$ if $\alpha=1$. \\ \\
(ii) If $\alpha>1$, then
\begin{align*}a_n&=\int_{c_n}^{d_n} \frac{1}{(x_{n+1}-s)^\alpha} ds \\
&=\frac{-1}{1-\alpha}(x_{n+1}-s)^{1-\alpha}|^{d_n}_{c_n} \\
&=\frac{1}{\alpha-1}\Big[   (x_{n+1}-d_n)^{1-\alpha}-(x_{n+1}-c_n)^{1-\alpha} \Big] .
\end{align*}
In this case,  if $x_{n+1}-c_n>1$, then $x_{n+1}-d_n=\frac{1}{n}$ and
\begin{align*}a_n&=  \frac{1}{\alpha-1}\Big[  \big (\frac{1}{n}\big)^{1-\alpha}-(x_{n+1}-c_n)^{1-\alpha} \Big]  \\
&>\frac{1}{\alpha-1}(n^{\alpha-1}-1).
\end{align*}
And if $x_{n+1}-c_n\leq 1$, then $x_{n+1}-d_n=\frac{x_{n+1}-c_n}{n}$ and
\begin{align*}a_n&=  \frac{1}{\alpha-1}\Big[   \big(\frac{x_{n+1}-c_n}{n}\big)^{1-\alpha}-(x_{n+1}-c_n)^{1-\alpha} \Big]  \\
&=  \frac{1}{\alpha-1}\Big[   (x_{n+1}-c_n)^{1-\alpha}  \big\{  \big(\frac{1}{n}\big)^{1-\alpha} -1 \big\} \Big]  \\
&\geq \frac{1}{\alpha-1} (n^{\alpha-1}-1).
\end{align*}
Thus $\lim_{n\rightarrow \infty}a_n=\infty$ if $\alpha \geq 1$. In the same way, $\lim_{n\rightarrow \infty} b_n=\infty$. 
Therefore, $(\E,D(\E))$ is recurrent, thus in particular conservative (cf. \cite[Theorems 1.6.5 and 1.6.6]{FOT}). Since $(\E,D(\E))$ is also strongly local, the process associated to $(\E,D(\E))$ is a conservative diffusion (cf. \cite{FOT}). Moreover, since by \cite[Example 3.3.2]{FOT} Cap$(\{x_i\})=0,$ the sets $(x_i, x_{i+1})$ are all invariant, i.e.
$$ p_t 1_{(x_i,x_{i+1})}(x) =0,\quad x\notin(x_i,x_{i+1}),\;\forall i \in \Z$$
where $p_t$ is the transition semigroup of (the process associated to) the Dirichlet form $(\E,D(\E))$. But
$$ d\mu((x_i,x_{i+1}))\neq 0, \qquad d\mu(\R\setminus (x_i,x_{i+1}))\neq0 \quad \forall i\in \Z.$$
Therefore $(\E,D(\E))$ is not irreducible (in the sense of \cite{FOT}).
\end{rem}
\centerline{}

\section{The one dimensional reflected case}\label{3}
Let $I=[0,\infty)$ and $C_0^\infty(I):=\{f:I \rightarrow \R \; |\; \exists g \in C_0^\infty (\R)$ with $g=f$ on $I \}$. Let $\varphi \in L^1_{loc}(I,dx)$ with $\varphi>0$ $dx$-a.e. Furthermore assume $\sigma$ be a (measurable) function such that $\sigma>0$ $dx$-a.e. and such that $\sigma \varphi \in L^1_{loc}(I, dx)$. Consider the symmetric bilinear form
$$\E(f,g):= \frac{1}{2} \int_I \sigma(x) f'(x)g'(x)  \mu(dx), \qquad f,g\in C^\infty_0(I)$$
on $L^2 (I,\mu)$ where as before $\mu:=\varphi dx$. As in the Section \ref{2}, we suppose $U$ is the largest open set in $I$ such that
$$\frac{1}{\sigma\varphi}\in L^1_{loc}(U,dx)$$
and assume
\begin{equation}\label{Hamza2}
dx(I \setminus U)=0.
\end{equation}
\centerline{}
We suppose $(\E,C_0^\infty (I ))$ is closable on $L^2(I,\mu)$. (By the results of \cite[Lemma 1.1]{tr}, if there is some open set $\tilde{U}\subset I$ such that $1/\varphi \in L^1_{loc}(\tilde{U},dx)$ and $dx(I \setminus(U\cap \tilde{U}))=0$, then $(\E,C_0^\infty(I))$ is closable on $L^2(I,\mu)$). Denote the closure by $(\E,D(\E))$. \\
There are two possible cases that we summarize in the following theorem.

\begin{theo}\label{reflected}
$(\E,D(\E))$ is recurrent if one of the following conditions holds:\\
(i) $U= V \cupdot (a,\infty)$ where $V$ is some open set (so either $V$ is empty if $a=0$, or $V$ is non-empty if $a>0$) and
$$
\int^\infty_{a+1}\frac{1}{\sigma \varphi}(s) ds =  \infty.
$$
(ii) $U= V \cupdot \bigcup_{n\in \N}I_n$, $I_n=(x_n, x_{n+1})$, $x_n < x_{n+1}$, $n\in \N$, $V$ some open set, and
\begin{align*}
a_n&:= \int_{c_n}^{d_n} \frac{1}{\sigma \varphi}(s) ds \; \longrightarrow \infty
\end{align*}
as $n \rightarrow \infty$, where  for $n\in \N$
$$
c_n= \frac{x_{n+1}+x_n}{2}
$$
and 
$$
d_n:= \begin{cases}\;x_{n+1}-\frac{1}{n}\qquad \;\qquad &\;\;x_{n+1}-c_n>1,\\
\; x_{n+1}-\frac{x_{n+1}-c_n}{n} &\;\;x_{n+1}-c_n\leq 1.
\end{cases}
$$
\end{theo}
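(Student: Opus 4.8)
The plan is to follow the proof of Theorem~\ref{nonreflected} almost verbatim, exploiting that the reflecting boundary at $0$ requires no cut-off: a recurrence-determining function may simply be taken equal to $1$ near $0$, and only its decay towards the right end of $I$ needs to be controlled. As before, by \cite[Theorem 1.6.3]{FOT} it suffices to produce a sequence $(u_n)_{n\in\N}\subset D(\E)$ with $0\le u_n\le 1$, $u_n\nearrow 1$ $dx$-a.e. as $n\to\infty$, and $\lim_{n\to\infty}\E(u_n,u_n)=0$.

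For (i) I would set
$$u_n(x):=\begin{cases}1 & x\in[0,a+1],\\ 1-\frac{1}{a_n}\int_{a+1}^x\frac{1}{\sigma\varphi}(t)\,dt & x\in[a+1,a+1+n],\\ 0 & \text{elsewhere},\end{cases}$$
with $a_n=\int_{a+1}^{a+1+n}\frac{1}{\sigma\varphi}(s)\,ds$, and for (ii)
$$u_n(x):=\begin{cases}1 & x\in[0,c_n],\\ 1-\frac{1}{a_n}\int_{c_n}^x\frac{1}{\sigma\varphi}(t)\,dt & x\in[c_n,d_n],\\ 0 & \text{elsewhere}.\end{cases}$$
These are exactly the one-sided (right-hand) analogues of the functions used in parts (i)--(iii) of Theorem~\ref{nonreflected}. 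Since $1/\sigma\varphi\in L^1_{loc}(U,dx)$ on the transition interval, each $u_n$ has compact support, satisfies $0\le u_n\le 1$, and is differentiable $dx$-a.e. with $u_n'=-\frac{1}{a_n}\frac{1}{\sigma\varphi}$ on that interval and $u_n'=0$ elsewhere. The monotonicity $u_n\nearrow 1$ $dx$-a.e. holds because the set $\{u_n=1\}$ increases (in (ii) because $c_n=\frac{x_n+x_{n+1}}{2}$ is increasing and $d_n<c_{n+1}$) and fills $I$ up to a $dx$-null set by \eqref{Hamza2}, exactly as in Theorem~\ref{nonreflected}.

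The only nontrivial point is $u_n\in D(\E)$, which I would establish by the same mollification argument. One extends $u_n$ to $\R$ by setting $u_n\equiv 1$ on $(-\infty,0]$ --- harmless, as $u_n$ is already constant equal to $1$ on a right neighbourhood of $0$ --- so that $\eta_\epsilon*u_n\in C_0^\infty(\R)$ and its restriction to $I$ belongs to $C_0^\infty(I)$, with $(\eta_\epsilon*u_n)'=\eta_\epsilon*u_n'$. The computations of the non-reflected proof then carry over unchanged: $\eta_\epsilon*u_n\to u_n$ in $L^2(I,\mu)$ by dominated convergence, and $|\eta_\epsilon*u_n'|\le\|u_n'\|_{L^1(I,dx)}$ yields the uniform bound $\sup_{0<\epsilon<1}\E(\eta_\epsilon*u_n,\eta_\epsilon*u_n)<\infty$. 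Hence $u_n\in D(\E)$ by \cite[Lemma 2.12]{mr}, the closedness of the form being guaranteed by \cite[Lemma 1.1]{tr}. Finally
$$\E(u_n,u_n)=\frac12\int_I (u_n')^2\,\sigma\varphi\,dx=\frac{1}{2a_n^2}\int_{c_n}^{d_n}\frac{1}{\sigma\varphi}(x)\,dx=\frac{1}{2a_n}\longrightarrow 0,$$
(the transition integral running over $[a+1,a+1+n]$ in case (i)), since $a_n\to\infty$ by hypothesis, which proves recurrence.

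The main obstacle is again the verification $u_n\in D(\E)$ under the mere local integrability assumptions on $\sigma$ and $\varphi$: one must check that the extension-and-mollification yields genuine elements of $C_0^\infty(I)$ and that the energy of the approximants stays uniformly bounded. In fact the reflecting boundary makes this easier rather than harder, since $u_n$ is flat near $0$ and no boundary cut-off (and hence no additional boundary term) ever enters.
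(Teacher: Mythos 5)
Your proposal is correct and is essentially identical to the paper's own proof: the paper uses exactly the same test functions $u_n$ in both cases (i) and (ii) and simply asserts they satisfy the desired properties, implicitly reusing the mollification argument from Theorem \ref{nonreflected}. Your extension of $u_n$ by the constant $1$ to $(-\infty,0]$ before mollifying, together with the observation that restrictions of $C_0^\infty(\R)$-functions lie in $C_0^\infty(I)$ by definition, correctly fills in the one detail the paper leaves implicit.
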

\centerline{}
\begin{proof}
(i) Let
$$
u_n(x):= \begin{cases}\;1 \qquad \qquad \qquad \qquad &x\in [0,a+1],\\
\; 1-\frac{1}{a_n}\int_{a+1}^x \frac{1}{\sigma \varphi}(t)dt&x\in[a+1,a+1+n],\\
\;0&\quad elsewhere,
\end{cases}
$$
where $a_n=\int^{a+1+n}_{a+1} \frac{1}{\sigma\varphi}(s) ds$. Then  
$(u_n)_{n\in \N}\subset D(\E)$ with $0\leq u_n \leq 1$, $n\in \N$, 
$u_n \nearrow 1 $ as $n \rightarrow \infty $ and $ \lim_{n\rightarrow \infty}\E(u_n , u_n)=0.$\\
(ii) Let
$$
u_n(x):= \begin{cases}\;1 \qquad \qquad \qquad \qquad &x\in [0,c_n],\\
\; 1-\frac{1}{a_n}\int_{c_n}^x \frac{1}{\sigma\varphi}(t) dt&x\in[c_n,d_n],\\
\;0&\quad elsewhere,
\end{cases}
$$
where $a_n= \int_{c_n}^{d_n} \frac{1}{\sigma \varphi}(s) ds$. Then $(u_n)_{n\in \N}$ satisfies the desired properties and determines recurrence.
\end{proof}
\centerline{}
\centerline{}
\begin{exam}\label{Bessel}
If $\varphi(x)=x^{\delta-1}$ with $\delta>0$ and $\sigma(x)\equiv 1$ on $I$, then clearly (\ref{Hamza2}) is satisfied. In this case the process associated to the regular Dirichlet form $(\E,D(\E))$ (cf. \cite{FOT}) is the well-known Bessel process of dimension $\delta$. We are going to find a sufficient condition on the dimension $\delta$ for recurrence. Since
$$
\int_1^\infty \frac{1}{\varphi}(s) ds =\int_1^{\infty} s^{1-\delta}ds =\begin{cases}\;\lim_{n\rightarrow \infty}\log\,n   \qquad \qquad &\delta=2,\\
\; \lim_{n\rightarrow \infty}\frac{1}{2-\delta}[n^{2-\delta}-1]&\delta \neq 2,\\
\end{cases}
$$
we see by Theorem \ref{reflected} (i) with $a=0$ that the Bessel processes of dimension $\delta$ is recurrent if $\delta\in(0,2]$. Note that using \cite[Theorem 3]{Stu1} we obtain the same calculations up to a constant. However, in \cite{Stu1} the Dirichlet form is supposed to be irreducible throughout which we do not demand.
\end{exam}
\begin{rem}\label{closedset}
Of course Theorem \ref{reflected} can be easily reformulated for Dirichlet forms defined on more general closed sets (cf. Remark \ref{openset}).
\end{rem}
\centerline{}

\section{The multidimensional case}\label{4}
We have found sufficient conditions for recurrence of Dirichlet forms ($\E,D(\E))$ with one-dimensional state space. Now we will extend our results to multi-dimensional Dirichlet forms of gradient type.\\
Let $d\geq 2$ and $\varphi \in L_{loc}^1(\R^d,dx)$ such that $\varphi>0$ $dx$-a.e. Let further $a_{ij}$  be measurable functions such that $a_{ij}=a_{ji}$, $1\leq i,j \leq d$ and such that for each compact set $K\subset \R^d$, there exists $c_K >0$ such that
\begin{equation}\label{locallyelliptic}
\sum_{i,j=1}^d a_{ij}(x) \xi_i \xi_j \leq c_K \sum_{i=1}^d \xi_i ^2, \qquad \forall x\in K,\quad \forall\xi\in\R^d.
\end{equation}
Assume that the bilinear form
$$\E(f,g)=\sum_{i,j=1}^d\int_{\R^d}  a_{ij}(x)\partial_i f(x) \partial_j g(x)  \varphi(x) dx, \quad f,g \in C_0^\infty(\R^d)$$
is well-defined, positive and closable on $L^2(\R^d,\mu)$  where $d\mu=\varphi dx$. By (\ref{locallyelliptic}), we can define an increasing function $b(r)$ on $[0,\infty)$ by
$$
b(r):=c_{\overline{B_r(0)}}
$$
where $\overline{B_r(0)}:=\{x\in \R^d\,|\,|x|\leq r\}$, and $|\cdot |$ denotes the Euclidean norm. Let $S$ be the $(d-1)$-dimensional surface measure. Define
$$\psi(r):=\int_{\partial \overline{B_r(0)}} \varphi (x) S(dx),$$ and assume $\psi(r) \in (0,\infty)$ $dr$-a.e. Let $U\subset [0,\infty)$ be the largest open set such that
$$
\frac{1}{\psi}\in L^1_{loc}(U,dx).
$$
We suppose $dx([0,\infty) \setminus U)=0$.
\centerline{}
\begin{rem} 
Note that $\psi(r)>0$ for almost every $r>0$ follows easily from \cite[Theorem, page 38]{maz} since $\varphi>0$ $dx$-a.e. 
Hence our assumption $\psi(r) \in (0,\infty)$ for a.e. $r$ is satisfied whenever $\varphi\in L^1(\partial \overline{B_r(0)},dS)$ for a.e. $r$. The latter is for instance satisfied if $\varphi \in H^{1,1}_{loc}(\R^d,dx)$.
\end{rem}
\centerline{}
Again we will present sufficient conditions for the recurrence of $(\E,D(\E))$, i.e. for the existence of $(u_n)_{n\in \N}\subset D(\E)$ with $0\leq u_n \leq 1$, $n\in \N$, $u_n \nearrow 1 $ as $n \rightarrow \infty $ and $ \lim_{n\rightarrow \infty}\E(u_n , u_n)=0.$

\centerline{}

\begin{theo}\label{multidim1}
$(\E,D(\E))$ is recurrent if one of the following conditions holds: \\
(i) $U= V \cupdot (a,\infty)$ where $V$ is some open set (so either $V$ is empty if $a=0$, or $V$ is non-empty if $a>0$) and
$$a_n:=\int_{a+1}^{a+1+n}\frac{1}{\psi}(s) ds \longrightarrow \infty,\quad  \frac{b(a+1+n)}{a_n} \longrightarrow 0$$
as $n \rightarrow \infty.$\\
(ii) $U= V \cupdot \bigcup_{n\in \N}I_n$, $I_n=(x_n, x_{n+1})$, $x_n < x_{n+1}$, $n\in \N$, $V$ some open set, and
$$a_n:= \int_{c_n}^{d_n} \frac{1}{\psi}(s) ds \; \longrightarrow \infty,\quad \frac{b(d_n)}{a_n}\longrightarrow 0$$
as $n \rightarrow \infty$, where for $n\in \N$
$$c_n= \frac{x_{n+1}+x_n}{2}$$
and
$$d_n:= \begin{cases}\;x_{n+1}-\frac{1}{n}\qquad \;\qquad &\;\;x_{n+1}-c_n>1,\\
\; x_{n+1}-\frac{x_{n+1}-c_n}{n} &\;\;x_{n+1}-c_n\leq 1.
\end{cases}$$
\end{theo}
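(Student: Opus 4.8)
The plan is to reduce the $d$-dimensional problem to the one-dimensional reflected case of Theorem~\ref{reflected} by working with radial functions, using the one-sided ellipticity bound \eqref{locallyelliptic} (we only have an upper bound, never coercivity). For case (i) I would take $u_n(x):=h_n(|x|)$, where $h_n$ is exactly the one-dimensional profile from the proof of Theorem~\ref{reflected}(i) with $\sigma\varphi$ replaced by $\psi$, namely
$$
h_n(r)=\begin{cases}1 & r\in[0,a+1],\\ 1-\frac{1}{a_n}\int_{a+1}^r\frac{1}{\psi}(t)\,dt & r\in[a+1,a+1+n],\\ 0 & \text{elsewhere},\end{cases}
$$
and for case (ii) the same profile with $[0,a+1]$, $[a+1,a+1+n]$ replaced by $[0,c_n]$, $[c_n,d_n]$. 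Since these are precisely the profiles of Theorem~\ref{reflected}, one has $0\le u_n\le1$ and $u_n\nearrow1$ $dx$-a.e. as $n\to\infty$, and $h_n$ is continuous with $h_n'(r)=-\frac{1}{a_n}\frac{1}{\psi(r)}$ on the relevant interval.

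The energy estimate is the clean part. For a radial $u(x)=h(|x|)$ one has $\partial_iu(x)=h'(|x|)x_i/|x|$, so $\sum_{i,j}a_{ij}(x)\partial_iu\,\partial_ju=h'(|x|)^2|x|^{-2}\sum_{i,j}a_{ij}(x)x_ix_j\le b(|x|)\,h'(|x|)^2$ by \eqref{locallyelliptic}. Since $u_n$ is supported in $\overline{B_{R_n}(0)}$ (with $R_n=a+1+n$ in case (i) and $R_n=d_n$ in case (ii)) and $b$ is increasing, passing to polar coordinates and using $\psi(r)=\int_{\partial\overline{B_r(0)}}\varphi\,dS$ gives
$$
\E(u_n,u_n)\le b(R_n)\int_{\R^d}h_n'(|x|)^2\varphi(x)\,dx=b(R_n)\int_0^\infty h_n'(r)^2\psi(r)\,dr=\frac{b(R_n)}{a_n}.
$$
By hypothesis $b(R_n)/a_n\to0$, which is exactly the condition tailored to force $\E(u_n,u_n)\to0$; thus, once $u_n\in D(\E)$ is established, recurrence follows from \cite[Theorem 1.6.3]{FOT} as in the previous sections.

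The main obstacle is therefore to show $u_n\in D(\E)$, i.e. that each radial $u_n$ lies in the closure of $C_0^\infty(\R^d)$. I would proceed exactly as in Sections~\ref{2} and \ref{3}: $u_n$ is continuous, bounded and compactly supported, and its distributional gradient $\nabla u_n(x)=h_n'(|x|)x/|x|$ lies in $L^1(\R^d,dx)$, because $|\nabla u_n|=\frac{1}{a_n}\frac{1}{\psi(|x|)}$ on the annulus and $r^{d-1}/\psi(r)$ is integrable on the compact radial interval contained in $U$. Mollifying, $\eta_\epsilon*u_n\in C_0^\infty(\R^d)$ with $\nabla(\eta_\epsilon*u_n)=\eta_\epsilon*\nabla u_n$, and $\eta_\epsilon*u_n\to u_n$ in $L^2(\R^d,\mu)$. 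The delicate point, where the Hamza-type assumption $dx([0,\infty)\setminus U)=0$ really enters, is the uniform energy bound $\sup_{0<\epsilon<1}\E(\eta_\epsilon*u_n,\eta_\epsilon*u_n)<\infty$; granting this, \cite[Lemma 2.12]{mr} yields $u_n\in D(\E)$. I expect this bound to be the crux: after reducing via \eqref{locallyelliptic} to $\int_{\R^d}|\eta_\epsilon*\nabla u_n|^2\varphi\,dx$ on the enlarged support $\overline{B_{R_n+1}(0)}$, one must control the mollified radial gradient against the weight $\varphi$, and it is precisely the local integrability of $1/\psi$ furnished by the Hamza condition that prevents the weighted energy from blowing up as $\epsilon\to0$, in direct analogy with the mollification estimate carried out in the proof of Theorem~\ref{nonreflected}(i).
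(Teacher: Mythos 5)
Your proposal is correct and follows essentially the same route as the paper: the paper's proof uses exactly the radial test functions $u_n(x)=h_n(|x|)$ you describe and obtains the identical energy estimate $\E(u_n,u_n)\le b(R_n)/a_n$ via (\ref{locallyelliptic}) and integration in polar coordinates, with $R_n=a+1+n$ in case (i) and $R_n=d_n$ in case (ii). The membership $(u_n)_{n\in\N}\subset D(\E)$ is merely asserted in the paper (implicitly deferring to the mollification argument of Section \ref{2}), so your discussion of that step is, if anything, more detailed than the published proof.
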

\centerline{}
\begin{proof}
(i) Let
$$u_n(x):= \begin{cases}\;1 \qquad \qquad \qquad \qquad &|x|\in [0,a+1],\\
\; 1-\frac{1}{a_n}\int_{a+1}^{|x|} \frac{1}{\psi}(t)dt&|x|\in[a+1,a+1+n],\\
\;0&\quad elsewhere,
\end{cases}$$
where $a_n=\int^{a+1+n}_{a+1} \frac{1}{\psi}(s) ds$. Then  $(u_n)_{n\in \N}\subset D(\E)$ with $0\leq u_n \leq 1$, $n\in \N$, $u_n \nearrow 1 $ as $n \rightarrow \infty.$ Note that
\begin{align*}
\E(u_n,u_n)&=\sum_{i,j=1}^d\int_{\R^d}  a_{ij}(x)\partial_i u_n(x) \partial_j u_n(x)  \varphi(x) dx\\
&= \int_{\overline{B_{a+1+n}(0)} \setminus B_{a+1}(0)} \sum_{i,j=1}^d a_{ij}(x)x_i x_j \frac{1}{|x|^2}\frac{1}{a_n^2} \frac{1}{\psi(|x|)^2} \varphi(x)dx\\
&\leq \frac{b(a+1+n)}{a_n^2} \int_{a+1}^{a+1+n}\,\int_{\partial\overline{B_r(0)}} \varphi(x) S(dx)\,\frac{1}{\psi(r)^2}dr\\
&=\frac{b(a+1+n)}{a_n}.
\end{align*}
where $B_r(0):=\{x\in \R^d\,|\,|x|<r\}$. Therefore $ \lim_{n\rightarrow \infty}\E(u_n , u_n)=0.$\\
(ii) Let
$$u_n(x):= \begin{cases}\;1 \qquad \qquad \qquad \qquad &|x|\in [0,c_n],\\
\; 1-\frac{1}{a_n}\int_{c_n}^{|x|} \frac{1}{\psi}(t) dt&|x|\in[c_n,d_n],\\
\;0&\quad elsewhere,
\end{cases}$$
where $a_n= \int_{c_n}^{d_n} \frac{1}{\psi}(s) ds$. Then $(u_n)_{n\in \N}$ satisfies desired the properties and determines recurrence.
\end{proof}
\centerline{}
Finally, let us consider a case which is possibly easier to calculate than the previous one. Let $\varphi \in L^1_{loc}(\R^d,dx)$, $\varphi>0$ $dx$-a.e. and  $a_{ij}$  be measurable functions such that $a_{ij}=a_{ji}$, $1\leq i,j \leq d$. We assume that the bilinear form
$$\E(f,g)=\sum_{i,j=1}^d\int_{\R^d}  a_{ij}(x)\partial_i f(x) \partial_j g(x)  \mu(dx), \quad f,g \in C_0^\infty(\R^d)$$
is well-defined, positive and closable on $L^2(\R^d,\mu)$ where $\mu:=\varphi dx$. \\
\centerline{}
\begin{theo}\label{multidim2}
Suppose that there is some compact set $K\subset \R^d$ and a function $\phi$ with
$$
\|A(x)\|\varphi(x) \leq \phi(| x |) \qquad \forall x\in \R^d\setminus K
$$
where $\|A(x)\|= \big[ \sum_{i,j=1}^d a_{ij}(x)^2   \big]^{\frac{1}{2}}$ and  $\phi \in L^1_{loc}(\R^d \setminus K,dx)$. Let
$$
a_n:= \int_{\overline{B_n(0)}\setminus B_\rho(0)} \frac{|y|^{2-2d}}{\phi (|y|)} dy
$$
where $\rho>0$ is such that $K\subset B_\rho (0)$ and $n>\rho$. If $a_n$ is finite for any $n>\rho$ and converges to $\infty$ as $n\rightarrow \infty$, then $(\E,D(\E))$ is recurrent.
\end{theo}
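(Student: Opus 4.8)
The plan is to verify the recurrence criterion recalled before Theorem \ref{multidim1} (cf. \cite[Theorem 1.6.3]{FOT}): it suffices to exhibit $(u_n)_{n\in\N}\subset D(\E)$ with $0\le u_n\le 1$, $u_n\nearrow 1$ and $\E(u_n,u_n)\to 0$, and I would take the $u_n$ to be \emph{radial}. Writing $\omega$ for the surface measure of the unit sphere in $\R^d$, polar coordinates give $a_n=\omega\,A_n$ with $A_n:=\int_\rho^n \frac{ds}{\phi(s)\,s^{d-1}}$, so I set $u_n(x):=U_n(|x|)$ with radial profile
$$
U_n(r):=\begin{cases} 1 & r\le \rho,\\[2pt] 1-\dfrac{1}{A_n}\displaystyle\int_\rho^{r}\dfrac{ds}{\phi(s)\,s^{d-1}} & \rho\le r\le n,\\[4pt] 0 & r\ge n.\end{cases}
$$
This is well defined because $a_n<\infty$ forces $\frac{1}{\phi(s)s^{d-1}}\in L^1((\rho,n),ds)$. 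One checks at once that $0\le u_n\le 1$, that $u_n$ is continuous with compact support, that $u_n\nearrow 1$ pointwise (since $A_n\to\infty$), and that the almost everywhere gradient $\nabla u_n(x)=-\frac{1}{A_n}\frac{1}{\phi(|x|)|x|^{d-1}}\frac{x}{|x|}$ is supported in the annulus $\{\rho\le|x|\le n\}$, which lies in $\R^d\setminus K$ because $K\subset B_\rho(0)$.

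The conceptual core is the energy estimate, where the radial ansatz collapses the $d$-dimensional problem to the one-dimensional integral defining $a_n$. Since $\partial_i u_n(x)=U_n'(|x|)\,x_i/|x|$, we have
$$
\sum_{i,j=1}^d a_{ij}(x)\,\partial_i u_n(x)\,\partial_j u_n(x)=U_n'(|x|)^2\sum_{i,j=1}^d a_{ij}(x)\,\frac{x_ix_j}{|x|^2}.
$$
By the Cauchy--Schwarz inequality for the Frobenius inner product, and because the matrix $\big(x_ix_j/|x|^2\big)_{i,j=1}^d$ has Frobenius norm $1$, it follows that $\big|\sum_{i,j}a_{ij}(x)x_ix_j/|x|^2\big|\le \|A(x)\|$. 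On the support of $\nabla u_n$, which lies in $\R^d\setminus K$, the hypothesis $\|A(x)\|\varphi(x)\le\phi(|x|)$ then yields, after passing to polar coordinates,
$$
0\le \E(u_n,u_n)\le \int_{\{\rho\le|x|\le n\}}U_n'(|x|)^2\,\phi(|x|)\,dx=\frac{\omega}{A_n^2}\int_\rho^n\frac{dr}{\phi(r)\,r^{d-1}}=\frac{\omega}{A_n}=\frac{\omega^2}{a_n},
$$
the lower bound coming from the assumed positivity of $\E$. As $a_n\to\infty$ this gives $\E(u_n,u_n)\to 0$.

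It remains to show $u_n\in D(\E)$, which I would establish exactly as in the proof of Theorem \ref{nonreflected}. One approximates $u_n$ by functions in $C_0^\infty(\R^d)$ --- either by mollification $\eta_\epsilon * u_n$, or, more transparently, by replacing $U_n$ with smooth profiles $U_n^{(k)}$ that are flat near the origin (so that $U_n^{(k)}(|\cdot|)\in C_0^\infty(\R^d)$), have gradient supported inside $\R^d\setminus K$, and satisfy $(U_n^{(k)})'\to U_n'$ in $L^2((\rho,n),\phi(r)r^{d-1}dr)$ (legitimate since $U_n'$ lies in that space, the energy above being finite). One then invokes \cite[Lemma 2.12]{mr}: an $L^2(\mu)$-limit of $C_0^\infty$ functions with uniformly bounded energies belongs to $D(\E)$. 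The $L^2(\mu)$-convergence is routine, as $u_n$ is bounded, continuous and compactly supported while $\mu$ is finite on compacts, and the uniform energy bound $\sup_k\E\big(U_n^{(k)}(|\cdot|),U_n^{(k)}(|\cdot|)\big)<\infty$ follows from the very estimate of the previous paragraph applied to each $U_n^{(k)}$.

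The step I expect to be most delicate is precisely this uniform energy control for the approximants. We are given neither global ellipticity nor any regularity of the $a_{ij}$, only the scalar domination $\|A\|\varphi\le\phi(|x|)$ with $\phi\in L^1_{loc}$; consequently the approximation must be arranged so that each approximant's gradient stays supported in $\R^d\setminus K$ (forcing one to keep the profiles constant on a fixed neighbourhood of $\{|x|\le\rho\}$) and so that the weighted one-dimensional energies actually converge, not merely remain finite, under the mild assumption on $\phi$. Once $u_n\in D(\E)$ is secured, the three properties $0\le u_n\le1$, $u_n\nearrow1$ and $\E(u_n,u_n)\to0$ establish recurrence of $(\E,D(\E))$ by the cited criterion.
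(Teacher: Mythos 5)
Your proposal is correct and follows essentially the same route as the paper's own proof: the identical radial profile (your $A_n$ is just $a_n/(d\cdot vol_d(B_1(0)))$, i.e.\ $a_n/\omega$), the same Frobenius Cauchy--Schwarz bound $\bigl|\sum_{i,j}a_{ij}(x)x_ix_j/|x|^2\bigr|\le\|A(x)\|$, the same use of the domination $\|A(x)\|\varphi(x)\le\phi(|x|)$ on the annulus, and the same polar-coordinate computation giving $\E(u_n,u_n)\le \omega^2/a_n\to 0$. The only difference is that you spell out the membership $u_n\in D(\E)$ (via mollification or smooth radial profiles with gradients supported in $\R^d\setminus K$, combined with \cite[Lemma 2.12]{mr}), a step the paper merely asserts by carrying over the approximation argument from its one-dimensional Theorem \ref{nonreflected}.
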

\begin{proof}
Since $\frac{|y|^{2-2d}}{\phi (|y|)}$ is rotationally invariant, we can rewrite $a_n$ as
$$
a_n=d\cdot vol_d(B_1(0)) \int_\rho^n \frac{s^{2-2d}}{\phi(s)}s^{d-1}ds =d\cdot vol_d(B_1(0)) \int_\rho^n \frac{s^{1-d}}{\phi(s)}ds
$$
where $vol_d(B_1(0))$ is the  volume of $B_1(0)$ in $\R^d$.
Define
$$ 
\psi_n(r):= \begin{cases}\;1  \qquad \qquad \qquad \qquad \qquad \qquad  &r\in [0,\rho],\\
\; 1-\frac{1}{a_n}\int_{\overline{B_r(0)}\setminus B_\rho(0)} \frac{|y|^{2-2d}}{\phi (|y|)} dy  &r \in [\rho,n], \\
\; 0  &elsewhere.
\end{cases}
$$
Then, we can rewrite $\psi_n$ as
$$ 
\psi_n(r)= \begin{cases}\;1  \qquad \qquad \qquad \qquad \qquad \qquad  &r\in [0,\rho],\\
\; 1-\frac{d\cdot vol_d(B_1(0))}{a_n}\int_\rho^r \frac{s^{1-d}}{\phi(s)}ds  &r \in [\rho,n],\\
\; 0  &elsewhere.
\end{cases}
$$
Set $u_n(x)=\psi_n(|x|)$, $n\in \N$ sufficiently large. Then $u_n$ is rotationally invariant, differentiable $dx$-a.e. and in $D(\E)$. Note that
\begin{align*} 
\E(u_n,u_n)&=\sum_{i,j=1}^d\int_{\R^d}  a_{ij}(x)\partial_i u_n(x) \partial_j u_n(x)  \mu(dx)\\
&= \sum_{i,j=1}^d\int_{\overline{B_n(0)}\setminus B_\rho(0)} \frac{d^2\cdot vol_d(B_1(0))^2}{a_n^2}a_{ij}(x)x_i x_j|x|^{-2d} \frac{1}{\phi^2 (|x|)} \varphi (x) dx \\
&\leq \frac{d^2\cdot vol_d(B_1(0))^2 }{a_n^2} \int_{\overline{B_n(0)}\setminus B_\rho(0)}   |x|^{2-2d} \frac{\|A(x)\|}{\phi^2 (|x|)} \varphi (x) dx \\
&\leq  \frac{d^2\cdot vol_d(B_1(0))^2}{a_n^2}\int_{\overline{B_n(0)}\setminus B_\rho(0)} |x|^{2-2d} \frac{1}{\phi (|x|)}  dx \\
&= \frac{d^2\cdot vol_d(B_1(0))^2}{a_n}.
\end{align*}
Since $d\cdot vol_d(B_1(0))$ is independent on $n$,  $\E(u_n,u_n) \longrightarrow 0$ as $n \rightarrow \infty$. Therefore $(\E,D(\E))$ is recurrent.
\end{proof}
\begin{rem}
Of course, Theorems \ref{multidim1} and \ref{multidim2} can be easily reformulated for more general open sets and in the reflected case (cf. Remarks \ref{openset} and \ref{closedset}, and Section \ref{3}).
\end{rem}

\end{document}